\documentclass[11pt, reqno]{article}
\usepackage{amssymb,amsthm,amsxtra,latexsym, amsmath}
\usepackage[utf8,nocaptions]{vietnam}
\usepackage{multirow}
\usepackage{multicol}
\usepackage[top=3.1cm, bottom=3.1cm, left=3cm, right=2.8cm] {geometry}
\usepackage{array}
\usepackage[symbol]{footmisc}
\usepackage{vwcol} % trước \begin{document}
\usepackage{indentfirst}

\newtheorem{theorem}{Theorem}[section]
\newtheorem{corollary}[theorem]{Corollary}
\newtheorem{lemma}[theorem]{Lemma}
\newtheorem{proposition}[theorem]{Proposition}
\newtheorem{example}[theorem]{Example}

\newcommand{\squeeze}{\spaceskip=0.15em\relax}

\DeclareMathOperator{\Ker}{Ker}

\DeclareMathOperator{\Ann}{Ann}

\DeclareMathOperator{\Ass}{Ass}

\makeatletter
\renewcommand*{\@biblabel}[1]{[\hfill#1].}
\makeatother

\usepackage{fancyhdr}
\makeatletter
\renewcommand{\@biblabel}[1]{[#1]}
\makeatother

\begin{document}

SEQUENTIAL $1$-COHEN-MACAULAYNESS FOR DIRECT SUM OF MODULES

	\fontsize{10pt}{11pt}{{\bf Nguyen Xuan Linh$^1$}, Hanoi University of Civil Engineering}
\footnotetext{\small Corresponding author: Email: linhnx@huce.edu.vn}\\
	\setcounter{section}{1}
		
\noindent {\bf 1. Introduction}

\ \ \ Throughout this paper, let $(R,\frak m )$ be a Noetherian local ring and  $M$ a finitely generated $R$-module. Let $H^0_{\frak m}(M)=D_t\subset \ldots \subset D_1\subset D_0=M$ be the dimension filtration of $M$, i.e. $D_{i+1}$ is the largest submodule of $M$ of dimension less than $\dim_R(D_i)$ for all $i\leq t-1$. Note that the dimension filtration of $M$ always exists uniquely. 

The class of Cohen-Macaulay modules is one of the central research objects of commutative algebra. Two important extensions of the notion of Cohen-Macaulay module are the notion of generalized Cohen-Macaulay module introduced by N. T. Cuong, P. Schenzel and N. V. Trung \cite{CST} and  the notion of sequentially Cohen-Macaulay module defined by R. P. Stanley \cite{St} in the graded setting  and by P. Schenzel \cite{Sch} in the local setting. In a natural way, the notion of sequentially generalized Cohen-Macaulay module was introduced in \cite{CN}.  We say that $M$ is {\it sequentially Cohen-Macaulay} (resp. {\it sequentially generalized Cohen-Macaulay}) if each quotient $D_i/D_{i+1}$ is Cohen-Macaulay (resp. generalized Cohen-Macaulay).

Let $H^i_{\frak m}(M)$ denote the $i$-th local cohomology module of $M$ with respect to $\frak m$.   Set  $$\frak a(M):=\frak a_0(M)~\frak a_1(M)\ldots \frak a_{d-1}(M),$$ where $\dim_R(M)=d$ and $\frak a_i(M)=\Ann_RH^i_{\frak m}(M)$ for  all $i$. Note that $M$ is generalized Cohen-Macaulay if and only if $\dim(R/\frak a(M))\leq 0.$ It suggests defining  the notion of  $1$-Cohen-Macaulay module and the notion of sequentially $1$-Cohen-Macaulay module as follows. We say that $M$ is {\it $1$-Cohen-Macaulay} if $\dim \big(R/\frak a(M)\big)\leq 1.$ We say that $M$ is  {\it sequentially $1$-Cohen-Macaulay} if each quotient $D_i/D_{i+1}$ is $1$-Cohen-Macaulay. 

Clearly, each Cohen-Macaulay module is  generalized Cohen-Macaulay; each generalized Cohen-Macaulay module is  $1$-Cohen-Macaulay. Similarly, each sequentially Cohen-Macaulay module is  sequentially generalized Cohen-Macaulay;
each sequentially generalized Cohen-Macaulay module is sequentially $1$-Cohen-Macaulay. The structures of these modules have attracted the interest of many mathematicians, for examples see [1]-[8].

From now on, let $(R,\frak m )$ be a Noetherian local ring and  $M_1,\ldots,M_n$  finitely generated $R$-modules. Set $M=M_1\oplus\ldots\oplus M_n.$ A characterization for $M$ being  sequentially Cohen-Macaulay was given in \cite[Proposition 3.2]{TPDA}. Moreover, if $M_1,\ldots,M_n$ are   sequentially generalized Cohen-Macaulay then so is $M$, see \cite[Proposition 4.5]{CN}.  The purpose of this paper is to characterize the sequential $1$-Cohen-Macaulayness of the direct sum $M$. 
The following theorem is the main result of this paper. 	

\medskip

\noindent{\bf Main theorem}.
 {\it $M=M_1 \oplus \ldots \oplus M_n$  is  sequentially $1$-Cohen-Macaulay if and only if  $M_i$ is sequentially  $1$-Cohen-Macaulay for all $i \leq n.$ }
 
\medskip

In the next section, we describe the largest submodule of the direct sum $M$. Then we prove the main theorem. An  example is given to clarify the results.
 
 \noindent {\bf 2. Methods} 
 
 In this paper, we employ inductive methods as well as the dimension filtration of a finitely generated module. We use mathematical induction to prove Lemma \ref{L22}, Lemma \ref{L25} and Theorem \ref{T26}. Furthermore, we provide a detailed analysis of the dimension filtration of a direct sum of modules relative to their largest submodules in Proposition \ref{P23} and Corollary \ref{C24}.
 \setcounter{section}{2}
 
\noindent {\bf 3. Main results}		

\setcounter{section}{3}

Firstly, we give a characterization for the direct sum $M$ being  $1$-Cohen-Macaulay. In order to do that, we need to recall the following lemma, see \textcolor{red}{\cite{NM}}.
\begin{lemma}\label{L21}
	Let  $0\to N_1\to N\to N_2\to 0$ be an exact sequence of finitely generated $R$-modules. Suppose $d=\dim_R(N)\geq 2.$ Then 
	\begin{itemize}
		\item[\rm{(a)}] If $\dim_R(N_1)\leq 1$ then $N$ is $1$-Cohen-Macaulay if and only if so is $N_2$. If $\dim_R(N_2)\leq 1$ then $N$ is $1$-Cohen-Macaulay if and only if so is $N_1$.
		\item[\rm{(b)}] Suppose that $\dim_R(N_1)=\dim_R(N_2)=d$. If $N_1, N_2$ are $1$-Cohen-Macaulay then so is $N$. If $N, N_2$ are $1$-Cohen-Macaulay then so is $N_1.$   
	\end{itemize}

\end{lemma}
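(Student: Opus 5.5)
The plan is to work entirely with the long exact sequence of local cohomology
\[
\cdots\to H^{i-1}_{\frak m}(N_2)\to H^i_{\frak m}(N_1)\to H^i_{\frak m}(N)\to H^i_{\frak m}(N_2)\to H^{i}_{\frak m}(N_1)\to\cdots
\]
and to use two elementary facts repeatedly. Fact (i): if $A\to B\to C$ is exact, then $\Ann(A)\Ann(C)\subseteq \Ann(B)$, so $\mathrm{V}(\Ann B)\subseteq \mathrm{V}(\Ann A)\cup \mathrm{V}(\Ann C)$. Fact (ii): if $\dim_R(L)\leq 1$, then $H^i_{\frak m}(L)=0$ for $i\geq 2$; moreover $\frak a_0(L)$ and $\frak a_1(L)$ contain $\Ann(L)$, hence $\dim R/\frak a_i(L)\leq 1$ for all $i$. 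Since $\dim(R/IJ)=\max\{\dim R/I,\dim R/J\}$, the condition $\dim R/\frak a(N)\leq 1$ is equivalent to $\dim R/\frak a_i(N)\leq 1$ for every $i<d$. So it suffices to control each $\frak a_i$ separately.

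For (a), suppose first $\dim N_1\leq 1$. Then $\dim N_2=d$. From the segment $H^i_{\frak m}(N_1)\to H^i_{\frak m}(N)\to H^i_{\frak m}(N_2)\to H^{i+1}_{\frak m}(N_1)$ and Fact (i), we get
\[
\frak a_i(N_1)\frak a_i(N_2)\subseteq \frak a_i(N),\qquad \frak a_i(N)\frak a_{i+1}(N_1)\subseteq \frak a_i(N_2)
\]
for every $i$. Fact (ii) then shows that $\dim R/\frak a_i(N)\le 1$ if and only if $\dim R/\frak a_i(N_2)\le 1$, for each $i<d$. Hence both modules are $1$-Cohen-Macaulay or neither is. The case $\dim N_2\leq 1$ is symmetric, using $H^{i-1}_{\frak m}(N_2)\to H^i_{\frak m}(N_1)\to H^i_{\frak m}(N)\to H^i_{\frak m}(N_2)$. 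Here the hypothesis $d\geq 2$ matters: since $d\ge 2$, all three modules, or at least the two relevant ones, have dimension $d$, so the range $i<d$ is the same for both.

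For (b), assume $\dim N_1=\dim N_2=d$. If $N_1$ and $N_2$ are $1$-Cohen-Macaulay, then $\frak a_i(N_1)\frak a_i(N_2)\subseteq\frak a_i(N)$ for all $i<d$, which gives the claim at once. For the second statement, assume $N$ and $N_2$ are $1$-Cohen-Macaulay and use $H^{i-1}_{\frak m}(N_2)\to H^i_{\frak m}(N_1)\to H^i_{\frak m}(N)$. This yields $\frak a_{i-1}(N_2)\frak a_i(N)\subseteq \frak a_i(N_1)$ for $1\le i\le d-1$, and $\frak a_0(N)\subseteq\frak a_0(N_1)$ for $i=0$. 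All indices that occur here, namely $i-1$ and $i$, are at most $d-1$, so every factor has $\dim R/(\cdot)\le 1$, and hence so does $\frak a_i(N_1)$.

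The main subtlety, and the step I expect to need the most care, is the boundary index. In (a), for $i=d-1$, the ideal $\frak a_{i+1}(N_1)=\frak a_d(N_1)$ appears. This is harmless because $H^d_{\frak m}(N_1)=0$ when $d\geq 2>\dim N_1$. More generally, I must check that no top-degree ideal $\frak a_d$ is ever needed for a module of dimension $d$, since $\dim R/\frak a_d$ could be $d$. In (b), one cannot pass from $N_1$ to $N_2$ in the same way, because that would require $\frak a_{i+1}(N_1)$ with $i+1=d$, where $H^d_{\frak m}(N_1)\neq 0$. This explains why only the stated implications are claimed.
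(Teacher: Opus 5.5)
The paper does not prove this lemma. It is recalled from \cite{NM} without proof, so there is no in-paper argument to compare yours against. Your proposal is a correct, self-contained proof.

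The ingredients you use are all valid:
\begin{itemize}
\item the inclusion $\Ann(A)\Ann(C)\subseteq \Ann(B)$ for an exact sequence $A\to B\to C$;
\item the reduction of $\dim R/\frak a(N)\le 1$ to $\dim R/\frak a_i(N)\le 1$ for each $i<\dim_R(N)$, via $V(IJ)=V(I)\cup V(J)$;
\item the observation that $\dim_R(L)\le 1$ forces $\dim R/\frak a_j(L)\le 1$ for every $j$, since each $\frak a_j(L)$ contains $\Ann L$.
\end{itemize}

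You also handle the two points that need care. In (a), the two modules being compared both have dimension $d$, so the index range $i<d$ is the same for both. This is exactly where $d\geq 2$ enters; for $d\le 1$ every module involved is trivially $1$-Cohen-Macaulay anyway. In the second half of (b), only $\frak a_{i-1}(N_2)$ and $\frak a_i(N)$ with indices below $d$ appear, so no top-degree annihilator is needed.

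What your route buys over a bare citation is transparency. It uses nothing beyond the long exact sequence of local cohomology, so it visibly works over an arbitrary Noetherian local ring. It also explains why the reverse implications in (b) are not asserted: they would require controlling $\frak a_d(N_1)$.

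One small slip: in your displayed long exact sequence, the term following $H^i_{\frak m}(N_2)$ should be $H^{i+1}_{\frak m}(N_1)$, not $H^{i}_{\frak m}(N_1)$. You use the correct connecting map when deriving $\frak a_i(N)\,\frak a_{i+1}(N_1)\subseteq \frak a_i(N_2)$, so nothing downstream is affected.
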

Now we characterize the $1$-Cohen-Macaulayness of the direct sum $M.$ 

	\begin{lemma}\label{L22}
$M=M_1 \oplus \ldots \oplus M_n$  is $1$-Cohen-Macaulay if and only if, for each $i\leq n$, either $\dim_R(M_i)\leq 1$ or $M_i$ is  $1$-Cohen-Macaulay of dimension $\dim_R(M)$. 
\end{lemma}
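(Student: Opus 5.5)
I would argue by induction on $n$, with the case $n=2$ carrying the substance. The case $n=1$ needs one remark: if $M=M_1$ is $1$-Cohen-Macaulay, then $M_1$ is trivially $1$-Cohen-Macaulay of dimension $\dim_R(M)$. The converse has to cover $\dim_R(M_1)\le 1$. In that case $H^i_{\frak m}(M)=0$ for $i\ge 2$ and $d\le 1$, so $\frak a(M)=\frak a_0(M)$ if $d=1$ (or $\frak a(M)=R$ if $d=0$), and $\dim R/\frak a(M)\le \dim M\le 1$ holds automatically. The same observation shows that every module of dimension at most $1$ is $1$-Cohen-Macaulay. So the whole statement is trivial when $d=\dim_R(M)\le 1$, and I may assume $d\ge 2$, which is exactly the hypothesis needed to apply Lemma \ref{L21}.

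For the induction step I write $M=N\oplus M_n$ with $N=M_1\oplus\cdots\oplus M_{n-1}$. This gives the split exact sequences $0\to N\to M\to M_n\to 0$ and $0\to M_n\to M\to N\to 0$. Since $d=\max\{\dim N,\dim M_n\}$, there are three cases.

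\emph{Case $\dim M_n\le 1$.} Then $\dim N=d\ge2$. Lemma \ref{L21}(a), applied with $N_2=M_n$, says $M$ is $1$-Cohen-Macaulay if and only if $N$ is. By induction, $N$ is $1$-Cohen-Macaulay if and only if each $M_i$, $i<n$, has dimension $\le1$ or is $1$-Cohen-Macaulay of dimension $\dim N=d$. This is the claimed condition, because $M_n$ satisfies it automatically.

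\emph{Case $\dim N\le 1$.} This is symmetric. Lemma \ref{L21}(a) gives that $M$ is $1$-Cohen-Macaulay if and only if $M_n$ is. Here $\dim M_n=d$, and all $M_i$ with $i<n$ have dimension $\le 1$, so again the conditions match.

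\emph{Case $\dim N=\dim M_n=d$.} If $N$ and $M_n$ are $1$-Cohen-Macaulay, then $M$ is by the first part of Lemma \ref{L21}(b). Conversely, suppose $M$ is $1$-Cohen-Macaulay. I would first get that $M_n$ is. Local cohomology commutes with direct sums, so $H^i_{\frak m}(M)=H^i_{\frak m}(N)\oplus H^i_{\frak m}(M_n)$ and hence $\frak a_i(M)=\frak a_i(N)\cap\frak a_i(M_n)\subseteq \frak a_i(M_n)$. Then $\frak a(M)\subseteq\frak a(M_n)$, so $\dim R/\frak a(M_n)\le\dim R/\frak a(M)\le1$. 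The same argument gives $N$ directly, though the second part of Lemma \ref{L21}(b), applied to $0\to N\to M\to M_n\to0$, would also yield it. Finally, induction applied to $N$, whose dimension equals $d$, translates the condition on $N$ into the condition on $M_1,\dots,M_{n-1}$.

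The only delicate point is the bookkeeping of dimensions. The induction hypothesis for $N$ refers to $\dim N$, so each case must confirm that $\dim N=d$ whenever some $M_i$ with $i<n$ has dimension $\ge2$. This holds because $\dim N=\max_{i<n}\dim M_i$. For the case $\dim N\le1$ I must also check that every $M_i$ with $i<n$ then has dimension $\le1$, which follows from the same identity.
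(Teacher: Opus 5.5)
There is a genuine gap: your case split is not exhaustive, and the case you omit is the one where the real content of the necessity direction lies.

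From $d=\max\{\dim N,\dim M_n\}$ you only know that one of the two summands has dimension $d$; the other can have any dimension up to $d$. So as soon as $d\ge 3$ you must also treat $\dim N=d$ with $2\le\dim M_n\le d-1$, and $\dim M_n=d$ with $2\le\dim N\le d-1$. In these cases some $M_i$ has dimension strictly between $1$ and $d$, so the right-hand condition fails, and you have to prove that $M$ is \emph{not} $1$-Cohen-Macaulay. Your argument never does this, and Lemma \ref{L21} cannot do it: part (a) needs a piece of dimension at most $1$, and part (b) needs both pieces of dimension $d$. Your last paragraph does not fix this either. The identity $\dim N=\max_{i<n}\dim M_i$ gives $\dim N\ge 2$ when some $M_i$ with $i<n$ has dimension $\ge 2$, but it does not give $\dim N=d$. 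What is missing is the fact that a $1$-Cohen-Macaulay module of dimension $d$ has no direct summand of dimension in $\{2,\dots,d-1\}$. The paper takes this from Corollary~2.3(iii) of its reference NM, in the form ``a submodule of dimension $>1$ of a $1$-Cohen-Macaulay module of dimension $d$ has dimension $d$''. It also arranges $\dim M_1=d$, so that $\dim N=d$ always and the remaining bad summands are handled by induction.

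You can close the gap with tools you already use plus one standard fact. Suppose $\dim N=d$ and $d':=\dim M_n\in\{2,\dots,d-1\}$; the other case is identical, with $d'=\dim N$ and $N$ in place of $M_n$. Since $d'\le d-1$, the ideal $\mathfrak a_{d'}(M)$ is one of the factors of $\mathfrak a(M)$. The splitting of local cohomology then gives
\[\mathfrak a(M)\subseteq\mathfrak a_{d'}(M)=\mathfrak a_{d'}(N)\cap\mathfrak a_{d'}(M_n)\subseteq\Ann_R H^{d'}_{\mathfrak m}(M_n).\]
For a finitely generated module $L$ of dimension $t$ one has $\dim R/\Ann_R H^{t}_{\mathfrak m}(L)=t$, since all attached primes of $H^t_{\mathfrak m}(L)$ have dimension $t$. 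Hence $\dim R/\mathfrak a(M)\ge d'\ge 2$, so $M$ is not $1$-Cohen-Macaulay, which is what the equivalence requires here. With this case added, the rest of your argument goes through, including your separate treatment of $\dim N\le 1$.
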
		
\begin{proof} The proof is by induction on $n.$ For $n=1$, it is trivial.  Let $n>1.$ Set $\dim_R(M)=d.$ If $d\leq 1$ then it is clear that $M$ and all $M_i$ are $1$-Cohen-Macaulay. Let $d\geq 2.$  Without loss of generality, we can assume that $\dim_R(M_1)=d.$ Set $N_1=M_1 \oplus \ldots \oplus M_{n-1}$ and $N_2=M_n.$ Then $\dim_R(N_1)=d.$ We consider two cases.
	
{\it Case 1}: $\dim_R(N_2)\leq 1.$ By Lemma \ref{L21}(a) and by induction, $M$ is $1$-Cohen-Macaulay if and only if $N_1$ is $1$-Cohen-Macaulay, if and only if,  for each $i\leq n-1$, either $\dim_R(M_i)\leq 1$ or $M_i$ is  $1$-Cohen-Macaulay of dimension $d$.

{\it Case 2}: $\dim_R(N_2)> 1.$ 
Suppose for each $i\leq n $,  either $\dim_R(M_i)\leq 1$ or $M_i$ is  $1$-Cohen-Macaulay of dimension $d$.  Then $\dim_R(N_2)=d.$ By induction, $N_1$ is $1$-Cohen-Macaulay. Therefore, $M$ is $1$-Cohen-Macaulay by Lemma \ref{L21}(b). 

Conversely, suppose that $M$ is $1$-Cohen-Macaulay. Since $N_2$ is a submodule of $M$, it follows by \textcolor{red}{\cite[Corollary 2.3(iii)]{NM}} that $\dim_R(N_2)=d$. For each $i\leq d$ we have  $$H^i_{\frak m}(M)\cong H^i_{\frak m}(N_1)\oplus H^i_{\frak m}(N_2).$$ Since $\dim(R/\frak a(M))\leq 1$, we have $\dim(R/\frak a(N_1))\leq 1$ and $\dim(R/\frak a(N_2))\leq 1$. Hence $N_1$ and $N_2$ are $1$-Cohen-Macaulay of dimension $d.$ By induction, for each $i\leq n-1 $,  either $\dim_R(M_i)\leq 1$ or $M_i$ is  $1$-Cohen-Macaulay of dimension $d$.

\end{proof}

  For each finitely generated $R$-module $N$, let $U_N(0)$  denote the largest submodule of $N$ of dimension less than $\dim_R (N).$ 

 Now we describe the  submodule $U_M(0)$ of $M.$
\begin{proposition} \label{P23}  Set $M=M_1\oplus M_2$ and $\dim_R(M)=d.$ Suppose $\dim_R(M_1)=d.$ Set $\dim_R(M_2)=d'.$ Then we have
$$U_M(0)=\begin{cases} U_{M_1}(0) \oplus U_{M_2}(0) & \text{ if } d'=d \\
U_{M_1}(0) \oplus M_2 & \text{ if } d'<d. 
\end{cases}$$
\end{proposition}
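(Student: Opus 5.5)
Since $\dim_R(M)=d$ and $\dim_R(M_1)=d$, the submodule $U_M(0)$ is the largest submodule of $M$ of dimension less than $d$. The plan is to establish the claimed equality by proving two inclusions, using two facts: $\dim_R(N\oplus N')=\max\{\dim_R N,\dim_R N'\}$, and the dimension of a submodule (or quotient) is at most that of the ambient module.

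\emph{The right-hand side lies in $U_M(0)$.} If $d'=d$, then $U_{M_1}(0)$ and $U_{M_2}(0)$ have dimension $<d$, so their direct sum has dimension $<d$. It is a submodule of $M$, hence is contained in $U_M(0)$ by maximality. If $d'<d$, then $U_{M_1}(0)\oplus M_2$ has dimension $\max\{\dim U_{M_1}(0),d'\}<d$, and the same maximality argument applies.

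\emph{$U_M(0)$ lies in the right-hand side.} Let $\pi_j:M\to M_j$ be the projections and $\iota_j:M_j\to M$ the inclusions. The key point is that $U_M(0)$ splits as $(U_M(0)\cap M_1)\oplus(U_M(0)\cap M_2)$, or at least sits inside $\pi_1(U_M(0))\oplus\pi_2(U_M(0))$; the latter containment holds trivially. Each $\pi_j(U_M(0))$ is a homomorphic image of $U_M(0)$, so it has dimension $\le\dim U_M(0)<d$. For $j=1$, $\pi_1(U_M(0))$ is a submodule of $M_1$ of dimension $<d=\dim M_1$, hence contained in $U_{M_1}(0)$. For $j=2$: if $d'=d$, the same argument gives $\pi_2(U_M(0))\subseteq U_{M_2}(0)$; if $d'<d$, we trivially have $\pi_2(U_M(0))\subseteq M_2$. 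Therefore
$$U_M(0)\subseteq \pi_1(U_M(0))\oplus\pi_2(U_M(0))\subseteq \text{RHS},$$
and together with the first inclusion this gives the equality.

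The only step needing care is the observation that $U_M(0)\subseteq\pi_1(U_M(0))\oplus\pi_2(U_M(0))$, which holds because every $x\in M$ satisfies $x=\pi_1(x)+\pi_2(x)$; and the bound $\dim$ of a quotient $\le\dim$ of the module. Everything else is formal, so I do not expect a real obstacle.
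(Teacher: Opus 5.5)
Your proof is correct and follows the paper's structure. The inclusion of the right-hand side in $U_M(0)$ comes from maximality and $\dim(N\oplus N')=\max\{\dim N,\dim N'\}$; the reverse inclusion comes from splitting into components and placing each component in the corresponding $U_{M_j}(0)$ (or in $M_2$ when $d'<d$). The paper proves $\dim Rm_1\le\dim Rm$ elementwise, by contradiction through an associated prime of $Rm_1$. You instead apply $\pi_j$ to all of $U_M(0)$ and use that a homomorphic image has dimension at most that of its source, which gives the same bound more cleanly.
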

\begin{proof}
We consider two cases.

{\it Case 1}: $d'=d.$  We have $$\dim_R(U_{M_1}(0) \oplus U_{M_2}(0)) =\max \{\dim_R(U_{M_1}(0), \dim_R(U_{M_2}(0)) \} <d.$$ Hence we have $U_{M_1}(0) \oplus U_{M_2}(0) \subseteq U_{M}(0).$ Let $m \in U_{M}(0) \subseteq M_1\oplus M_2.$ Write $m=m_1+m_2$, where $m_1 \in M_1, m_2 \in M_2.$ We first prove that $\dim_R(Rm_1)<d.$ Suppose to contrary that $\dim_R(Rm_1)=d$. Then there exists $\frak p \in \Ass_R (Rm_1)$ such that $\dim (R/\frak p)=d.$ Then we write  $\frak p =\Ann(rm_1)$ for some $r\in R$. Note that $a(rm)=0$ if and only if $a(rm_1)=0$ and $a(rm_2)=0$ for all $a\in R.$ Hence $\frak p \supseteq \Ann(rm).$   Therefore, $$d=\dim(R/\frak p) \leq \dim_R(Rm) \leq \dim_R U_{M}(0) <d.$$ This gives a contradiction. Hence $\dim_R(Rm_1)<d$.  So, $m_1 \in U_{M_1}(0).$ 
By the same arguments, $m_2 \in U_{M_2}(0).$ Hence $m=m_1+m_2\in U_{M_1}(0) \oplus U_{M_2}(0).$ Therefore,   $U_M(0)=U_{M_1}(0) \oplus U_{M_2}(0).$

{\it Case 2}: $d'<d.$ As $\dim_R(U_{M_1}(0) \oplus M_2) <d,$ we have  $U_{M_1}(0) \oplus M_2 \subseteq U_{M}(0).$ Let $m \in U_{M}(0).$ Then  $m=m_1+m_2$ with $m_1 \in M_1, m_2 \in M_2.$ By the same arguments  to the case 1, we have $m_1 \in U_{M_1}(0).$ Hence $m\in U_{M_1}(0) \oplus M_2.$ Therefore,  $U_M(0)=U_{M_1}(0) \oplus M_2.$

\end{proof}

We have the following consequence of Proposition \ref{P23}.
\begin{corollary}\label{C24}  Set $M=M_1\oplus M_2$ and $\dim_R(M)=d.$ Suppose $\dim_R(M_1)=d, \dim_R(M_2)=d'.$
	The following statements are true.
\begin{itemize}
	\item [{\rm (a)}] If $d'=d$ then $M/U_M(0) \cong M_1/U_{M_1}(0) \oplus M_2/U_{M_2}(0).$ 
 
  \item[{\rm (b)}] If $d'<d$ then $M/U_M(0) \cong M_1/U_{M_1}(0).$
\end{itemize}
\end{corollary}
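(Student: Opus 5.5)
The plan is to read off Corollary \ref{C24} directly from Proposition \ref{P23}, using only the elementary fact that quotients of a direct sum by a direct sum of submodules split componentwise. Concretely, for submodules $K_1\subseteq M_1$ and $K_2\subseteq M_2$ I will use the natural map
$$M_1\oplus M_2\longrightarrow M_1/K_1\oplus M_2/K_2,\qquad (x_1,x_2)\mapsto (x_1+K_1,\,x_2+K_2).$$
This map is surjective, and its kernel is exactly $K_1\oplus K_2$. Hence $(M_1\oplus M_2)/(K_1\oplus K_2)\cong M_1/K_1\oplus M_2/K_2$.

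For part (a), assume $d'=d$. Proposition \ref{P23} gives $U_M(0)=U_{M_1}(0)\oplus U_{M_2}(0)$. Applying the observation above with $K_1=U_{M_1}(0)$ and $K_2=U_{M_2}(0)$ yields $M/U_M(0)\cong M_1/U_{M_1}(0)\oplus M_2/U_{M_2}(0)$.

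For part (b), assume $d'<d$. Proposition \ref{P23} gives $U_M(0)=U_{M_1}(0)\oplus M_2$. Applying the same observation with $K_1=U_{M_1}(0)$ and $K_2=M_2$ gives
$$M/U_M(0)\cong M_1/U_{M_1}(0)\oplus M_2/M_2=M_1/U_{M_1}(0).$$

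The only substantive input is Proposition \ref{P23}, which is already established, so there is no real obstacle. The one point to check is that the submodule identifications in Proposition \ref{P23} are equalities of submodules of $M$ compatible with the given decomposition, and not merely abstract isomorphisms. They are, since the proof writes each $m\in U_M(0)$ as $m_1+m_2$ with respect to $M=M_1\oplus M_2$.
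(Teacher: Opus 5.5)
Your proposal is correct and is essentially the paper's proof. The paper also uses the natural surjection $M\to M_1/U_{M_1}(0)\oplus M_2/U_{M_2}(0)$ in (a) and the projection $M\to M_1/U_{M_1}(0)$ in (b), identifying the kernel with $U_M(0)$ via Proposition \ref{P23}; your treating (b) as the case $K_2=M_2$ of one general quotient isomorphism just packages the same argument uniformly.
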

\begin{proof}
a) Let $\varphi: M \to M_1/U_{M_1}(0) \oplus M_2/U_{M_2}(0)$ be the epimorphism of $R$-modules given by $\varphi (m_1 + m_2)=\overline{m}_1+\overline{m}_2,$ where $m_i\in M_i$ and $\overline{m}_i=m_i+ U_{M_i}(0)$ for each $i\in\{1,2\}.$ Since $d'=d$ by the assumption, it follows by Proposition \ref{P23} that $\Ker \varphi =U_{M_1}(0) \oplus U_{M_2}(0)=U_M(0).$ Hence $M/U_M(0) \cong M_1/U_{M_1}(0) \oplus M_2/U_{M_2}(0).$ 

b) Suppose that $d'<d.$ Consider the homomorphism $\psi:M \to M_1/U_{M_1}(0) $ which is defined by    $\varphi (m_1 + m_2)=m_1+ U_{M_1}(0).$ It is clear that $\psi$ is an epimorphism of $R$-modules and $\Ker \psi =U_{M_1}(0)\oplus M_2.$ As $d'<d$, we get by Proposition \ref{P23} that $\Ker \psi =U_{M}(0)$ and so $M/U_M(0) \cong M_1/U_{M_1}(0).$ 

\end{proof}

\begin{lemma} \label{L25}
%Let $d\geq 2$ be an integer. 
Set $M=M_1\oplus M_2$ and $\dim_R(M)=d.$ Suppose $\dim_R(M_1)=d,$  $\dim_R(M_2)=d'.$ Then $M$  is  sequentially $1$-Cohen-Macaulay if and only if so are $M_1, M_2.$  
\end{lemma}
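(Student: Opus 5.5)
Induction on $d=\dim_R(M)$, or equivalently on the combined length of the dimension filtrations of $M_1$ and $M_2$, using Proposition~\ref{P23} and Corollary~\ref{C24} to track $D_1=U_M(0)$. The key remark is that if $D_0\supset D_1\supset\cdots$ is the dimension filtration of $M$, then $D_1=U_M(0)$. Moreover, the dimension filtration of $U_M(0)$ is $D_1\supset D_2\supset\cdots$. Hence $M$ is sequentially $1$-Cohen-Macaulay if and only if $M/U_M(0)$ is $1$-Cohen-Macaulay and $U_M(0)$ is sequentially $1$-Cohen-Macaulay. Also, any module of dimension $\le 1$ is sequentially $1$-Cohen-Macaulay, since all its filtration quotients have dimension $\le1$. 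So the base case $d\le 1$ is trivial for $M$, $M_1$ and $M_2$ at once. I also allow $M_2=0$ as a degenerate case, where the statement is trivial.

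\emph{Case $d'=d$.} By Proposition~\ref{P23}, $U_M(0)=U_{M_1}(0)\oplus U_{M_2}(0)$. By Corollary~\ref{C24}(a), $M/U_M(0)\cong M_1/U_{M_1}(0)\oplus M_2/U_{M_2}(0)$, and both summands have dimension exactly $d$. Lemma~\ref{L22} applies to this sum of two modules, each of dimension $d$. It says that $M/U_M(0)$ is $1$-Cohen-Macaulay if and only if both $M_i/U_{M_i}(0)$ are. For $d\ge2$ the alternative ``dimension $\le1$'' cannot occur. For the submodule parts, $U_{M_1}(0)\oplus U_{M_2}(0)$ has dimension $<d$. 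I apply the induction hypothesis to it, ordering the summands so that the first has the larger dimension and noting that the lemma is symmetric in the roles. This gives that $U_M(0)$ is sequentially $1$-Cohen-Macaulay if and only if both $U_{M_i}(0)$ are. Combining the two equivalences with the key remark finishes this case.

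\emph{Case $d'<d$.} By Proposition~\ref{P23} and Corollary~\ref{C24}(b), $U_M(0)=U_{M_1}(0)\oplus M_2$ and $M/U_M(0)\cong M_1/U_{M_1}(0)$. Thus the top quotient condition for $M$ is exactly the top quotient condition for $M_1$. The submodule $U_{M_1}(0)\oplus M_2$ has dimension $<d$, so by induction it is sequentially $1$-Cohen-Macaulay if and only if $U_{M_1}(0)$ and $M_2$ both are. Together these say that $M$ is sequentially $1$-Cohen-Macaulay if and only if $M_1$ is (its top quotient together with $U_{M_1}(0)$) and $M_2$ is.

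The main obstacle is making the induction well-founded and the hypotheses of the lemma applicable at each step. The summands of $U_M(0)$ may have either order of dimensions, and one of them may be zero. So I would formulate the induction statement symmetrically: for any $N_1,N_2$ with $\dim(N_1\oplus N_2)<d$, the sum is sequentially $1$-Cohen-Macaulay if and only if both summands are. I would also check carefully the key remark that the dimension filtration of $U_M(0)$ is the tail of that of $M$. This follows from the maximality defining each $D_{i+1}$.
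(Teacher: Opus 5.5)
Your proposal is correct and follows the paper's proof. Both argue by induction on $d$, split into the cases $d'=d$ and $d'<d$, use Proposition~\ref{P23} and Corollary~\ref{C24} to identify $U_M(0)$ and $M/U_M(0)$, apply Lemma~\ref{L22} to the top quotient, and apply the induction hypothesis to $U_M(0)$. You also make explicit two points the paper uses without comment: that the dimension filtration of $U_M(0)$ is the tail of that of $M$, and that the induction hypothesis must be stated symmetrically, so that either summand may have the larger dimension or be zero.
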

\begin{proof} 
We prove by induction on $d.$  If $d \leq 1$ then this is clear. Let $d>1.$ 
Assume that $M_1, M_2$  are sequentially  $1$-Cohen-Macaulay. Then $U_{M_1}(0), U_{M_2}(0)$ are sequentially $1$-Cohen-Macaulay.  
%we have  $U_M(0)= U_{M_1}(0) \oplus U_{M_2}(0)  \text{ if } d=d'$ or $U_M(0)= U_{M_1}(0) \oplus M_2  \text{ if } d' <d.$
As $\dim_R(U_{M}(0))<d$ we get by  Proposition \ref{P23} and by induction that $U_{M}(0)$ is sequentially $1$-Cohen-Macaulay. 

We consider two cases.

{\it Case 1}: $d'=d.$ By Corollary \ref{C24}, we have $M/U_M(0) \cong M_1/U_{M_1}(0) \oplus M_2/U_{M_2}(0).$ Since 
$M_1, M_2$  are sequentially  $1$-Cohen-Macaulay, $M_1/U_{M_1}(0)$ and $M_2/U_{M_2}(0)$ are $1$-Cohen-Macaulay of dimension $d$. By Lemma \ref{L22}, we have $M/U_M(0)$  is $1$-Cohen-Macaulay. Thus, $M$ is  sequentially $1$-Cohen-Macaulay. 

{\it Case 2}: $d'<d.$ By Corollary \ref{C24}, we have $M/U_M(0) \cong M_1/U_{M_1}(0)$. As $M_1/U_{M_1}(0)$ is $1$-Cohen-Macaulay, so is $M/U_{M}(0).$ Therefore, $M$ is  sequentially $1$-Cohen-Macaulay.

Conversely, suppose that $M$ is  sequentially $1$-Cohen-Macaulay. We consider two cases.

{\it Case 1}: $d'=d.$ We have $U_M(0)= U_{M_1}(0) \oplus U_{M_2}(0)$ by Proposition \ref{P23}. As 
$U_M(0)$ is  sequentially $1$-Cohen-Macaulay of dimension less than $d$, induction assumption implies that $U_{M_1}(0), U_{M_2}(0)$ are sequentially $1$-Cohen-Macaulay. By Corollary \ref{C24}, we have 
$$M/U_M(0) \cong M_1/U_{M_1}(0) \oplus M_2/U_{M_2}(0).$$   Since $M/U_M(0)$ is  $1$-Cohen-Macaulay, so are $M_1/U_{M_1}(0)$ and $M_2/U_{M_2}(0)$  by Lemma \ref{L22}. Thus, $M_1, M_2$  are sequentially  $1$-Cohen-Macaulay. 

{\it Case 2}: $d'<d.$ Since $U_M(0)= U_{M_1}(0) \oplus M_2$ by Proposition \ref{P23}, we get  by induction that   $U_{M_1}(0)$ and $M_2$ are sequentially $1$-Cohen-Macaulay. Since $M/U_M(0) \cong M_1/U_{M_1}(0)$ by Corollary \ref{C24} and $M/U_M(0)$ is  $1$-Cohen-Macaulay,  $M_1/U_{M_1}(0)$ is $1$-Cohen-Macaulay. Thus, $M_1, M_2$  are sequentially  $1$-Cohen-Macaulay. 
  \end{proof}
 Now we are ready to prove the main result of this paper.
 
\begin{theorem}\label{T26}
	$M=M_1 \oplus \ldots \oplus M_n$ is  sequentially $1$-Cohen-Macaulay if and only if  $M_i$ is sequentially  $1$-Cohen-Macaulay for all $i \leq n.$
\end{theorem}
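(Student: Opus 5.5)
The plan is to argue by induction on $n$, reducing everything to the two-summand case already settled in Lemma \ref{L25}. For $n=1$ there is nothing to prove. For $n=2$, reorder the summands so that $\dim_R(M_1)=\dim_R(M)$. This is harmless, because $M_1\oplus M_2\cong M_2\oplus M_1$ and sequential $1$-Cohen-Macaulayness depends only on the isomorphism class: the dimension filtration is intrinsic, so an isomorphism carries it to the dimension filtration and induces isomorphisms of the quotients $D_i/D_{i+1}$. After this reordering, Lemma \ref{L25} applies directly.

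For $n>2$, first permute the summands so that $\dim_R(M_1)=\dim_R(M)=d$; by the remark above this does not affect either side of the equivalence. Set $N=M_1\oplus\cdots\oplus M_{n-1}$, so that $M\cong N\oplus M_n$. Since $M_1$ is a summand of $N$, we have $\dim_R(N)=d=\dim_R(M)$. Hence Lemma \ref{L25}, applied to the decomposition $N\oplus M_n$ with $N$ in the role of the top-dimensional summand, shows that $M$ is sequentially $1$-Cohen-Macaulay if and only if both $N$ and $M_n$ are. By the induction hypothesis applied to the $n-1$ summands of $N$, the module $N$ is sequentially $1$-Cohen-Macaulay if and only if each of $M_1,\ldots,M_{n-1}$ is. Combining the two equivalences gives the theorem.

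The only point needing care is that the hypotheses of Lemma \ref{L25} are met, namely that the first summand has the full dimension $d$. This is arranged by choosing the ordering before grouping, together with the invariance of the notion under isomorphism. No step should be a genuine obstacle once Lemma \ref{L25} is available, since all the real work (Proposition \ref{P23}, Corollary \ref{C24}, Lemma \ref{L22}) has already been done there.
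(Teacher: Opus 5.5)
Your proposal is correct and follows essentially the same route as the paper: induction on $n$, splitting $M$ into two summands, invoking Lemma \ref{L25}, and applying the induction hypothesis to the grouped summand. The only difference is that you split off $M_n$ rather than $M_1$ and explicitly reorder so the grouped summand has full dimension $d$; this spells out the dimension hypothesis of Lemma \ref{L25}, which the paper's proof (taking $N_1=M_1$) leaves implicit.
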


\begin{proof}
%Let $d\geq 2$ be an integer
We prove by induction on $n.$  The case $n=1$ is clear. Let $n>1.$ Set $N_1=M_1$ and $N_2=M_2\oplus \ldots \oplus M_n.$ Then $M=N_1 \oplus N_2.$ By Lemma \ref{L25}, $M$ is sequentially $1$-Cohen-Macaulay if and only if so are $N_1, N_2$. By induction, $N_2$ is sequentially $1$-Cohen-Macaulay if and only if $M_i$ is sequentially  $1$-Cohen-Macaulay for all $i \geq 2.$ Therefore, $M$ is sequentially $1$-Cohen-Macaulay if and only if  so are $M_i$  for all $i \leq n.$
\end{proof}

We give an example to clarify  results.
\begin{example}{\rm
		Let $R= K[[x, y, z, t, w]]$ be the formal power series ring over a field $K$ and $\frak m=(x,y,z,t,w)$ the maximal ideal of $R$. Let $M_1=R/(x, z) \cap (y)\cap (t), M_2=R/(x, y)\cap (z, t).$  Set $M=M_1\oplus M_2.$ Then $\dim_R(M)=\dim_R(M_1)=4$ and $\dim_R(M_2)=3.$ We can check that $H^i_{\frak m}(M_1)=0$ for all $i\leq 2$ and $\dim(R/\frak a_3(M_1))=3>1.$  So $M_1$ is not $1$-Cohen-Macaulay. We have $H^i_{\frak m}(M_2)=0$ for all $i\leq 1$ and $\dim(R/\frak a_2(M_2)) =1$. So $M_2$ is $1$-Cohen-Macaulay. Note that $M$ is not $1$-Cohen-Macaulay by Lemma \ref{L22}.
		
		Note that $0\subset M_2$ is the dimension filtration of $M_2.$ Therefore, $M_2$ is sequentially $1$-Cohen-Macaulay. Set $D_1=(y)\cap(t)/(x,z)\cap (y)\cap (t).$ Then $0\subset  D_1\subset M_1$ is the dimension filtration of $M_1.$ We can check that $M_1/D_1$ is Cohen-Macaulay of dimension $4.$ Moreover, we note that $D_1\cong (x,z,y)\cap (x,z,t)/(x,z).$ So, we have the exact sequence $$0\to D_1\to R/(x,z)\to R/(x,z,y)\cap (x,z,t)\to 0.$$ It follows that $H^i_{\frak m}(D_1)=0$ for all $i\leq 2.$ Hence $D_1$ is Cohen-Macaulay of dimension $3.$ So, $M_1$ is sequentially Cohen-Macaulay of dimension $4.$ Therefore, $M$ is sequentially $1$-Cohen-Macaulay by Theorem \ref{T26}. By Proposition \ref{P23}, the dimension filtration of $M$ is $0\subset D_1\oplus M_2\subset M.$ We have $M/D_1\oplus M_2\cong M_1/D_1$ is Cohen-Macaulay of dimension $4.$ Since $\dim(R/\frak a_2(M_2))=1$, we have $\dim(R/\frak a_2(D_1\oplus M_2))=1.$ So, $M$ is not sequentially generalized  Cohen-Macaulay. Hence $M$ is not sequentially  Cohen-Macaulay.}
		\end{example}
		\noindent {\bf 4. Conclusion}
		
	In this paper, we describe the largest submodule of the direct sum $M$ of dimension less than $\dim_R(M)$. Then we establish a necessary and sufficient condition for a direct sum being $1$-Cohen-Macaulay. Our main contribution, Theorem \ref{T26}, characterizes the sequential $1$-Cohen-Macaulayness of a direct sum.  We also give  an example to illustrate the applications of  Proposition \ref{P23} and Theorem \ref{T26}. In future work, we plan to study related problems in combinatorics, such as the question of when a Stanley-Reisner ring is sequentially 1-Cohen-Macaulay.
 
	\end{document}